\numberwithin{equation}{section}
\theoremstyle{plain} 
\theoremstyle{definition} 
\newtheorem{thm}{Theorem}[section]
\newtheorem{lem}[thm]{Lemma}
\newtheorem{prop}[thm]{Proposition}
\theoremstyle{definition}
\newtheorem{defn}{Definition}[section]
\theoremstyle{remark}
\newtheorem{rem}{Remark}[section]
\newcommand{\be}{\begin{equation}}
	\newcommand{\ee}{\end{equation}}
\newcommand{\bea}{\begin{eqnarray}}
	\newcommand{\eea}{\end{eqnarray}}
\newcommand{\ben}{\begin{eqnarray*}}
	\newcommand{\een}{\end{eqnarray*}}
\newcommand{\bt}{\begin{split}}
	\newcommand{\et}{\end{split}}
\newcommand{\bet}{\begin{equation}}
	\newcommand{\mc}{\mathbb{C}}
	\newcommand{\ra}{\rightarrow}
	\newcommand{\beq}{\begin{equation*}}
		\newcommand{\eeq}{\end{equation*}}
	\newcommand{\bi}{\begin{itemize}}
		\newcommand{\ei}{\end{itemize}}
\begin{document}
		
\title[Approximation and extension of Hermitian metrics]
{Approximation and extension of Hermitian metrics on holomorphic vector bundles over Stein manifolds}
		
\author[F. Deng]{Fusheng Deng}
\address{Fusheng Deng: \ School of Mathematical Sciences, University of Chinese Academy of Sciences\\ Beijing 100049, P. R. China}
\email{fshdeng@ucas.ac.cn}
\author[J. Ning]{Jiafu Ning}
\address{Jiafu Ning: \ School of Mathematics and Statistics, HNP-LAMA, Central South University, Changsha, Hunan 410083, P. R. China}
\email{jfning@csu.edu.cn}
\author[Z. Wang]{Zhiwei Wang}
\address{ Zhiwei Wang: \ School
	of Mathematical Sciences\\Beijing Normal University\\Beijing\\ 100875\\ P. R. China}
\email{zhiwei@bnu.edu.cn}
\author[X. Zhou]{Xiangyu Zhou}
\address{Xiangyu Zhou: Institute of Mathematics\\Academy of Mathematics and Systems Sciences\\and Hua Loo-Keng Key
    Laboratory of Mathematics\\Chinese Academy of
    Sciences\\Beijing\\100190\\P. R. China}
\address{School of
    Mathematical Sciences, University of Chinese Academy of Sciences,
    Beijing 100049, China}
\email{xyzhou@math.ac.cn}

\begin{abstract}
We show that a singular Hermitian metric on a holomorphic vector bundle over a Stein manifold which is negative in the sense of Griffiths (resp. Nakano)
can be approximated by a sequence of smooth Hermitian metrics with the same curvature negativity.
We also show that a smooth Hermitian metric on a holomorphic vector bundle over a Stein manifold restricted to a submanifold
which is negative in the sense of Griffiths (resp. Nakano) can be extended to the whole bundle with the same curvature negativity.
 \end{abstract}

		\maketitle
		
\section{Introduction}\label{sec:intro}
It is known that any plurisubharmonic function on a Stein manifold can be globally approximated point-wise
by a decreasing sequence of smooth plurisubharmonic functions \cite{For-Nar80}.
For Stein manifolds, another important result states that any plurisubharmonic function
on the submanifold of a Stein manifold can always be extended to a plurisubharmonic function on the ambient space \cite{Sa}.
Note that a plurisubharmonic function on a complex manifold can be viewed as a positively curved singular Hermitian metric on the trivial line bundle over the manifold.
The aim of the present work is to prove similar results for positively curved (in certain sense, to be clarified later) singular Hermitian metrics
on holomorphic vector bundles over Stein manifolds.



To state the main results, we first recall some notions about curvature positivity of singular Hermitian metrics on holomorphic vector bundles.

Let $\pi:E\ra X$ be a holomorphic vector bundle over a complex manifold $X$.
Then a singular Hermitian metric is a measurable section $h$ of $E^*\otimes\overline{E^*}$
that gives a Hermitian inner product on $E_x$, the fiber of $E$ over $x$,  for almost all $x\in X$.
Given such a singular Hermitian metric $h$ on $E$, we can define a dual singular Hermitian metric $h^*$
in the dual bundle $E^*$ of $E$ in a natural way.

The following definition is given in \cite{BP}.

\begin{defn}
Let $\pi:E\ra X$ be a holomorphic vector bundle over a complex manifold $X$.
A singular Hermitian metric $h$ on $E$ is negatively curved in the sense of Griffiths
if $||u||^2_h$ is plurisubharmonic for any local holomorphic
section $u$ of $E$, and we say that $h$ is positively curved in the sense of Griffiths
if the dual metric $h^*$ on the dual bundle $E^*$ of $E$ is negatively curved in the sense of Griffiths.
\end{defn}

For smooth Hermitian metrics on holomorphic vector bundles,
Berndtsson discovered an equivalent characterization in \cite{B}, as follows.
Let $h$ be a smooth Hermitian metric on $E$.
For any local coordinate system $\{z=(z_1,\cdots,z_n),U\}$ on $X$ and any
$n$-tuple local holomorphic sections
  $v=(v_1,\cdots,v_n)$ of $E$ over $U$,  we set
  $$T^h_v:=\sum_{j,k=1}^{n}(v_j,v_k)_h\widehat{dz_j\wedge d\bar{z}_k},$$
  where $\widehat{dz_j\wedge d\bar{z}_k}$ is the wedge product of
all $dz_s$ and $d\bar z_s$ except $dz_j$ and $d\bar z_k$, multiplied by a constant of absolute value 1,
  such that  $$idz_j\wedge d\bar{z}_k\wedge \widehat{dz_j\wedge d\bar{z}_k}=
  idz_1\wedge d\bar{z}_1\wedge\cdots\wedge idz_n\wedge d\bar{z}_n=:dV_z.$$
Then $h$ is negatively curved if and only if if $i\partial\bar\partial T^h_v\geq 0$
for all choices of $v$.

If $h$ is assumed to be singular, $T^h_v$ can also be defined as a current on $U$ of bi-degree $(n-1, n-1)$,
and hence $i\partial\bar\partial T^h_v$ is an $(n,n)$-current on $U$.
Following Berndtsson's observation, Raufi induces the following definition \cite{R}
of Nakano negativity and dual Nakano positivity for singular Hermtian metric on holomorphic vector bundles.

\begin{defn}\label{Defn}
Let $\pi:E\ra X$ be a holomorphic vector bundle over a complex manifold $X$ of dimension $n$.
A singular Hermitian metric $h$ on $E$ is negatively curved in the sense of Nakano,
  if $$i\partial\bar\partial T^h_v\geq 0$$
holds for any $n$-tuple local holomorphic sections $v=(v_1,\cdots, v_n)$, and we say that $h$ is dual Nakano positive
if $h^*$ is negatively curved in the sense of Nakano.
\end{defn}

The first result of the note is the following
\begin{thm}\label{thm c g}
Let $(E,h)$ be a singular hermitian holomorphic vector bundle over a Stein manifold $X$.
Assume $h$ is negatively curved in the sense of Griffiths (resp. Nakano).
Then there exists a sequence of smooth hermitian metrics $\{h_k\}_{k=1}^\infty$ with negative curvature in the sense of Griffiths (resp. Nakano),
such that for any compact subset $K\subset X$, there is $k(K)$, for $k\geq k(K)$, $h_k$ decrease to $h$
pointwise on $K$.
\end{thm}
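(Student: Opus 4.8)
\emph{Overview.} The plan is to first reduce to the trivial bundle, then regularize locally by convolution — which, perhaps surprisingly, respects the curvature negativity — and finally patch the local pieces together in the style of Fornaess--Narasimhan using a strictly plurisubharmonic exhaustion of $X$.

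\emph{Reduction to the trivial bundle.} Since $X$ is Stein, $E^{*}$ is a quotient of a trivial bundle $X\times\mathbb{C}^{N}$, so dually $E$ sits as a holomorphic subbundle $j\colon E\hookrightarrow X\times\mathbb{C}^{N}$; being a subbundle over a Stein base it is a direct summand, $X\times\mathbb{C}^{N}\cong E\oplus E'$. Equip $E'$ with the smooth metric $h'$ obtained by restricting the flat metric of $X\times\mathbb{C}^{N}$; restriction to a holomorphic subbundle preserves both Griffiths and Nakano seminegativity, so $h'$ is negatively curved in either sense. Then $\widehat h:=h\oplus h'$ is a singular metric on the trivial bundle, and since a direct sum of metrics that are negatively curved in the sense of Griffiths (resp. Nakano) is again of that type, $\widehat h$ is negatively curved in the corresponding sense. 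If $\widehat h$ admits smooth negatively curved approximants $\widehat h_{k}$ with the stated decreasing property, then the restrictions $\widehat h_{k}|_{E}$ are smooth, negatively curved in the same sense, and decrease to $\widehat h|_{E}=h$ on compacta. Hence it suffices to treat $E=X\times\mathbb{C}^{r}$, writing the metric as a measurable map $H\colon X\to\mathrm{Herm}^{+}_{r}$.

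\emph{Local regularization.} On a coordinate ball $U\subset X$ identified with a ball in $\mathbb{C}^{n}$, set $H_{\varepsilon}:=H*\chi_{\varepsilon}$, the mollification of each matrix entry. Then $H_{\varepsilon}$ is smooth on $U_{\varepsilon}$, and since the norms of constant sections are plurisubharmonic, $H_{\varepsilon}\downarrow H$ pointwise as $\varepsilon\downarrow 0$. The key point is that $H_{\varepsilon}$ is again negatively curved: for any local holomorphic section $u$,
$\langle H_{\varepsilon}(z)u(z),u(z)\rangle=\int\langle H(z-w)u(z),u(z)\rangle\,\chi_{\varepsilon}(w)\,dw$
is an average over $w$ of plurisubharmonic functions of $z$ — each translate $H(\,\cdot\,-w)$ being a negatively curved metric on its domain — hence plurisubharmonic; and running the same averaging on Berndtsson's current gives $i\partial\bar\partial T^{H_{\varepsilon}}_{v}=\int i\partial\bar\partial T^{H(\,\cdot\,-w)}_{v}\,\chi_{\varepsilon}(w)\,dw\ge 0$, so Nakano negativity is preserved too. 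I would also record the elementary facts that a sum of negatively curved metrics, and a product $e^{\psi}H$ with $\psi$ plurisubharmonic, are negatively curved in either sense — these are the operations available for the patching.

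\emph{Globalization, and the main obstacle.} Fix a smooth strictly plurisubharmonic exhaustion $\rho$ of $X$ with sublevel sets $X_{c}=\{\rho<c\}$ and run a Fornaess--Narasimhan-type construction: over a neighbourhood of each compact shell $\overline{X_{c_{m+1}}}\setminus X_{c_{m}}$ cover by finitely many coordinate balls, convolve as above, amalgamate the resulting smooth negatively curved metrics, and then interpolate between consecutive shells using the exhaustion (twisting the contribution attached to a shell by an exponential of an affine function of $\rho$, which keeps negativity, while letting the convolution radii and the mesh $c_{m+1}-c_{m}$ shrink). Iterating produces the sequence $h_{k}$, and the shell structure is exactly what forces the conclusion in the form ``for $k\ge k(K)$ the $h_{k}$ decrease to $h$ on $K$''. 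The difficult point is precisely this patching: for scalar plurisubharmonic functions the Fornaess--Narasimhan argument relies on the stability of plurisubharmonicity under $\max$, and there is no honest maximum of Hermitian metrics preserving Griffiths or Nakano negativity, so one must build a substitute out of the operations above and, more delicately, calibrate the weights so that the global metric genuinely decreases to $H$ rather than merely dominating it. (Alternatively one may lift everything to the Stein total space $\mathrm{Tot}(E)$, where $\|\cdot\|^{2}_{h}$ is plurisubharmonic — at least in the Griffiths case — so that the scalar Fornaess--Narasimhan theorem applies directly; the difficulty then migrates to re-homogenizing the approximants, which are a priori only fibrewise convex, i.e.\ negatively curved Finsler metrics, back into genuine Hermitian metrics without destroying negativity or monotonicity, the zero section being the delicate locus.)
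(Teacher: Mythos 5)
Your reduction to the trivial bundle is exactly the paper's first step, and your local convolution argument is the known local result (Berndtsson--P\u{a}un, Raufi) that the paper also invokes. But the globalization step, which is the actual content of the theorem beyond the local statement, is left open in your proposal: you say one must ``build a substitute'' for the $\max$ operation of the Forn\ae ss--Narasimhan scheme and ``calibrate the weights,'' but you do not build it, and this is precisely where the difficulty lives. Amalgamating finitely many locally convolved metrics over a shell is not a routine matter --- there is no partition-of-unity or max construction that preserves Griffiths or Nakano negativity --- so as written the proof does not close.

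The paper sidesteps the patching problem entirely by a different device, which is the idea your plan is missing. After reducing to $E=X\times\mathbb{C}^r$, embed $X$ properly in $\mathbb{C}^N$ and use Siu's theorem to get a Stein neighborhood $U$ of $X$ with a holomorphic retraction $\pi\colon U\to X$; then $h\circ\pi$ is a negatively curved singular metric on $U\times\mathbb{C}^r$, and one performs a \emph{single} ambient convolution in $\mathbb{C}^N$ over a neighborhood of each $X_k=\{|z|\le k\}$ --- no charts to reconcile. The convolved metric is then cut off by $\chi_k$ and multiplied by $e^{v_k(u_k)}$, where $u_k=\max_{\delta_k}\{|z|^2-k^2-\delta_k,0\}+|f|^2$ (with $f$ cutting out $X$) vanishes identically on $X_k$ but is strictly plurisubharmonic and bounded below on the region where the cutoff damages negativity; choosing $v_k$ convex increasing with $v_k(0)=0$ absorbs the error there without altering the metric on $X_k$. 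This scalar conformal twist by a strictly psh function that vanishes on the good set is the ``substitute for $\max$'' you were looking for. If you want to salvage your outline, replace the shell-by-shell amalgamation with this retraction-plus-ambient-convolution step; your alternative via $\mathrm{Tot}(E)$ has the re-homogenization problem you already note and is not pursued in the paper.
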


A local version, namely, the case that $X$ is a polydisc and $E$ is trivial, of the Theorem \ref{thm c g} was proved in \cite{BP} and \cite{R}.

The second result is about extension of Hermitian metrics.

\begin{thm}\label{thm ext G}
Let $E$ be a  holomorphic vector bundle over a Stein manifold $X$, and $Y$ be a closed submanifold of $X$.
Assume that $h$ is a smooth hermitian metric on $E|_Y$ with negative curvature in the sense of Griffiths (resp. Nakano),
then $h$ can be extended to a smooth hermitian metric $\tilde{h}$ on $E$  with negative curvature in the sense of Griffiths (resp. Nakano).
\end{thm}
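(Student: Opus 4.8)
The plan is to reduce, via a holomorphic retraction, to a situation where near $Y$ the metric is already negatively curved, then extend it to an arbitrary smooth metric on all of $E$, and finally correct that metric away from $Y$ by a plurisubharmonic conformal factor so as to restore the curvature negativity globally.

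First I would invoke Siu's Stein neighbourhood theorem together with the Docquier--Grauert theorem to obtain an open Stein neighbourhood $U$ of $Y$ together with a holomorphic retraction $r\colon U\to Y$. Since $r$ is a homotopy equivalence, $E|_U$ and $r^*(E|_Y)$ are topologically isomorphic, hence holomorphically isomorphic by Grauert's Oka principle; after composing with $r^*$ of the restriction of this isomorphism to $Y$ we may assume it is the identity over $Y$, and so $h$ transports to a smooth metric $\hat h:=r^*h$ on $E|_U$ with $\hat h|_Y=h$. Because $\Theta_{\hat h}=r^*\Theta_h$ and the Griffiths (resp. Nakano) curvature form of $r^*h$ at a point is obtained from that of $h$ by precomposition with the $\mathbb{C}$-linear map $\mathrm{d}r\otimes\mathrm{id}_E$, semi-negativity is preserved; thus $\hat h$ is negatively curved in the sense of Griffiths (resp. Nakano) on $U$. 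Straightening $r$ in local coordinates $(z,w)$ with $Y=\{w=0\}$ and $r(z,w)=(z,0)$, one moreover sees that near $Y$ the metric $\hat h$ depends only on $z$, so that $\Theta_{\hat h}$ is block diagonal with vanishing normal and mixed components there.

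Next I would extend $\hat h$ to a global smooth metric $h_0$ on $E$ over $X$: fixing any background smooth metric $g$ and writing $\hat h=g(B\,\cdot,\cdot)$ for a smooth positive self-adjoint endomorphism $B$ of $E|_U$, I interpolate $B$ with $\mathrm{id}_E$ by a cutoff supported in $U$ and equal to $1$ near $Y$; convexity of the cone of positive self-adjoint endomorphisms keeps the interpolation positive, giving a smooth metric $h_0$ on $E$ with $h_0=\hat h$ on a smaller neighbourhood $U'$ of $Y$ and $h_0|_Y=h$. In particular $\Theta_{h_0}$ is negatively curved on $U'$ (even $\Theta_{h_0}\le 0$ in the Nakano order there), while on $X\setminus U'$ it is completely uncontrolled. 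I now want $\psi\in C^\infty(X)$ plurisubharmonic with $\psi\equiv 0$ on a neighbourhood $U''\subseteq U'$ of $Y$ and $(i\partial\bar\partial\psi)\otimes\mathrm{id}_E\ge_{\mathrm{Nak}}\Theta_{h_0}$ (resp. $\ge_{\mathrm{Gri}}$) on all of $X$ --- automatic on $U''$, where $\Theta_{h_0}\le 0$. Then $\tilde h:=e^{\psi}h_0$ is smooth, $\tilde h|_Y=e^{0}h=h$, and $\Theta_{\tilde h}=\Theta_{h_0}-(i\partial\bar\partial\psi)\otimes\mathrm{id}_E$ is Nakano (resp. Griffiths) semi-negative, which is exactly Theorem~\ref{thm ext G}. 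To produce $\psi$ I would use Cartan's Theorem A to write $Y=\{g_1=\cdots=g_m=0\}$ with the $g_j$ chosen so that $i\sum_j\partial g_j\wedge\bar\partial\bar g_j>0$ off $Y$, take a strictly plurisubharmonic exhaustion $\rho_0$ of $X$ with $i\partial\bar\partial\rho_0$ dominating a fixed metric, and set $\psi=H\bigl(\rho_0+\log\sum_j|g_j|^2\bigr)$ for a convex increasing $H$ vanishing on a half-line, so that $\psi\equiv 0$ near $Y$ and $i\partial\bar\partial\psi\ge H'(\,\cdot\,)\,i\partial\bar\partial\rho_0$ can be made as large as needed away from $Y$.

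The step I expect to be the main obstacle is precisely this last one. Any admissible $\psi$ must tend to its minimum along the possibly non-compact $Y$ while being large on an adjacent region, and the naive choice above breaks down when the level sets of $\rho_0+\log\sum_j|g_j|^2$ fail to be compact near ``$Y$ at infinity'', where $\Theta_{h_0}$ may be unbounded; one therefore has to either also build the background extension $h_0$ with curvature controlled at infinity, or run a careful exhaustion argument that patches in the corrections over an increasing sequence of sublevel sets of $\rho_0$ while keeping everything plurisubharmonic and identically zero on $Y$. For the Griffiths case there is an alternative avoiding the correction step altogether: since $X$ Stein forces $\mathrm{Tot}(E)$ to be Stein, and $h$ is Griffiths negative exactly when $v\mapsto\log\|v\|^2_h$ is plurisubharmonic on $\mathrm{Tot}(E)\setminus(\text{zero section})$, one can extend this function from the closed submanifold $\mathrm{Tot}(E|_Y)\subset\mathrm{Tot}(E)$ by the plurisubharmonic extension theorem of \cite{Sa}; the subtlety there is to force the extension to be logarithmically homogeneous along the fibres and to come from a genuine Hermitian (not merely Finsler) norm, which requires an extra symmetrisation. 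In the Nakano case, note that the functoriality used in the first paragraph is exactly what guarantees $\hat h=r^*h$ is Nakano (and not merely Griffiths) semi-negative, so the same scheme applies verbatim; and if it is convenient to carry part of the argument out in the singular category, Theorem~\ref{thm c g} can be invoked at the end to recover smoothness.
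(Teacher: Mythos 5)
Your overall strategy is the same as the paper's: pull $h$ back by a holomorphic retraction $\pi\colon U\to Y$ of a Stein neighbourhood (Siu), glue the resulting negatively curved metric on $E|_U$ to a background metric by a cutoff, and then twist by $e^{\psi}$ with $\psi$ plurisubharmonic, vanishing along $Y$, and with Levi form dominating the curvature of the glued metric where the gluing has spoiled negativity. (The only structural difference is that the paper first realizes $E$ as a direct summand of a trivial bundle $E\oplus F$ via Oka--Grauert, carries out the construction there, and restricts back to $E$ at the end using that subbundles inherit Griffiths/Nakano negativity; your direct version on $E$ with an arbitrary background metric is equally valid and is in fact what the paper does in its Theorem~\ref{thm ext G 2}.)

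The gap is the step you yourself flag as ``the main obstacle'': you never actually produce a $\psi$ whose Levi form dominates $\Theta_{h_0}$ globally, and your candidate $\psi=H\bigl(\rho_0+\log\sum_j|g_j|^2\bigr)$ is left in a state where, as you note, $H'$ may be evaluated at arbitrarily negative arguments on the transition region near ``$Y$ at infinity'', so the required domination is not established. The paper closes exactly this gap with a two-parameter choice. Let $V\supset Y$ be the open set where the cutoff is $\equiv 1$, let $\phi\ge 0$ be a strictly plurisubharmonic exhaustion and $|f|^2=\sum_{j=1}^m|f_j|^2$ with $Y=\{f_1=\cdots=f_m=0\}$. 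Since $V^c\cap\{\phi\le c\}$ is compact and disjoint from $Y$ for every $c$, one can choose a single continuous $v$ on $[0,\infty)$ with \emph{both} $i\Theta_{h'}\le v(\phi)\,i\partial\bar\partial\phi\otimes I$ on $X$ \emph{and} $|f|^2\ge 1/v(\phi)$ on $V^c$; then one takes $u$ smooth convex increasing with $u\ge 0$ and $u'\ge v^2$, and sets $\tilde h=e^{|f|^2e^{u(\phi)}}h'$. The factor $|f|^2e^{u(\phi)}=e^{\log|f|^2+u(\phi)}$ is smooth, plurisubharmonic, and vanishes on $Y$ (vanishing \emph{on} $Y$, rather than on a neighbourhood, already gives $\tilde h|_Y=h$, so your insistence on $\psi\equiv 0$ near $Y$ is an unnecessary constraint), and on $V^c$ its Levi form is at least $|f|^2e^{u(\phi)}u'(\phi)\,i\partial\bar\partial\phi\ge \frac{1}{v(\phi)}\cdot v(\phi)^2\,i\partial\bar\partial\phi=v(\phi)\,i\partial\bar\partial\phi\ge i\Theta_{h'}$. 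This is precisely the uniform-at-infinity estimate your sketch is missing; your form of $\psi$ could be repaired by a further reparametrization of $\rho_0$, but as written the proof is not complete without it.
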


We do not know if the result in Theorem \ref{thm ext G} still holds if $h$
is assumed to be singular.


\subsection*{Acknowledgements}
		This research is supported by National Key R\&D Program of China (No. 2021YFA1002600 and No. 2021YFA1003100). The authors are partially supported respectively by NSFC grants (11871451,  12071485, 12071035, 11688101).
		The first author is  partially supported by the Fundamental Research Funds for the Central Universities.
		The third author is partially supported by Beijing Natural Science Foundation (1202012, Z190003).

\section{approximation of singular Hermitian metrics}
We put the following obvious results as a lemma.
\begin{lem}\label{add}
Let $E$ be a holomorphic vector bundle over a complex manifold $X$,
 $h_1$ and $h_2$ be two singular hermitian metrics on $E$. Then

 $(1)$ if $h_1$ and $h_2$ are both with negative curvature in the sense of Griffiths, so is $h_1+h_2$;

 $(2)$ if $h_1$ and $h_2$ are both with negative curvature in the sense of Nakano, so is $h_1+h_2$.
\end{lem}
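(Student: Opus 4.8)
The plan is to notice that in each case the object whose positivity \emph{defines} the curvature condition is additive in the metric, so the lemma reduces to the elementary facts that a sum of plurisubharmonic functions is plurisubharmonic and that a sum of nonnegative currents is nonnegative. Note first that $h_1+h_2$ is again a genuine singular Hermitian metric on $E$: it is measurable and gives a Hermitian inner product on $E_x$ for almost all $x$ (namely wherever both $h_1$ and $h_2$ do), so the relevant objects attached to it are defined.

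For (1), fix an open set $U\subset X$ and a local holomorphic section $u$ of $E$ over $U$. Wherever $h_1$, $h_2$ are both defined one has the pointwise identity $||u||^2_{h_1+h_2}=||u||^2_{h_1}+||u||^2_{h_2}$, hence this holds a.e.\ on $U$. By hypothesis $||u||^2_{h_1}$ and $||u||^2_{h_2}$ are plurisubharmonic on $U$, and the sum of two plurisubharmonic functions is plurisubharmonic; thus $||u||^2_{h_1+h_2}$ is plurisubharmonic on $U$. Since $U$ and $u$ were arbitrary, $h_1+h_2$ is negatively curved in the sense of Griffiths.

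For (2), fix a local coordinate chart $\{z=(z_1,\dots,z_n),U\}$ and an $n$-tuple $v=(v_1,\dots,v_n)$ of local holomorphic sections of $E$ over $U$. From $(v_j,v_k)_{h_1+h_2}=(v_j,v_k)_{h_1}+(v_j,v_k)_{h_2}$ for all $j,k$ we get, by summing the defining expression, the equality of currents $T^{h_1+h_2}_v=T^{h_1}_v+T^{h_2}_v$ on $U$; applying the linear operator $i\partial\bar\partial$ yields $i\partial\bar\partial T^{h_1+h_2}_v=i\partial\bar\partial T^{h_1}_v+i\partial\bar\partial T^{h_2}_v$, which is nonnegative as a sum of two nonnegative $(n,n)$-currents. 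Hence $h_1+h_2$ is negatively curved in the sense of Nakano. There is essentially no obstacle here; the only point worth a word is the bookkeeping above, namely checking that the quantities $||u||^2_{h}$ and $T^h_v$ behave additively in $h$ and are simultaneously well defined for $h_1$, $h_2$, and $h_1+h_2$, which is immediate.
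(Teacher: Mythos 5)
Your proof is correct and fills in exactly the additivity argument the paper has in mind: the authors state this lemma without proof, calling it obvious, and the intended justification is precisely that $\|u\|^2_h$ and $T^h_v$ are additive in $h$, so the claim reduces to the sum of plurisubharmonic functions being plurisubharmonic and the sum of nonnegative currents being nonnegative. No gaps.
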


%
%

From the Oka-Grauert principle for holomorphic vector bundles, we have the following

\begin{lem}\label{trivial}
Let $E$ be a holomorphic vector bundle over a Stein manifold $X$.
There is a holomorphic vector bundle $F$ on $X$,
such that $E\oplus F$ is a trivial holomorphic vector bundle.
\end{lem}
\begin{proof}
Let $f:X\ra G(r, N)$ be a continuous map such that $f^{-1}U$ and $E$ are isomorphic as complex topological vector bundles over $X$,
where $G(r, N)$ is a Grassmann manifold and $U\ra G(r,N)$ is the universal vector bundle over $G(r,N)$.
Note that $U$ is a subbundle of the trivial vector bundle $G(r,N)\times \mc^N$, hence
there exists a topological vector bundle $F$ over $X$ such that $E\oplus F$ is a topologically trivial vector bundle over $X$.
By the Oka-Grauert principle, there is a unique holomorphic structure on $F$ and $E\oplus F$ is trivial as a holomorphic vector bundle over $X$.
\end{proof}


\begin{lem}\label{metric}
For any vector bundle $E$ on a Stein manifold $X$,
there is a smooth hermitian metric $h$ on $E$
such that $i\Theta_h$ is both Nakano positive and dual Nakano positive.
Furthermore, there is a smooth hermitian metric $h'$ on $E$
such that $i\Theta_{h'}$ is  Nakano negative.
\end{lem}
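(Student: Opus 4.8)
The plan is to reduce everything to the case of the trivial bundle by means of Lemma~\ref{trivial}, where positivity is easy to arrange, and then to pull the metric back along an embedding into a trivial bundle. First I would recall that on a Stein manifold $X$ every holomorphic vector bundle $E$ admits, by Lemma~\ref{trivial}, a complement $F$ so that $E\oplus F\cong X\times\mathbb{C}^N$ is holomorphically trivial. A trivial bundle carries the constant Hermitian metric, whose curvature is identically zero; to make the curvature strictly positive in both the Nakano and the dual-Nakano sense one multiplies this flat metric by $e^{-\psi}$ for a strictly plurisubharmonic exhaustion $\psi$ on $X$ (such a $\psi$ exists because $X$ is Stein). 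Concretely, on $X\times\mathbb{C}^N$ the metric $H=e^{-\psi}\,\mathrm{Id}$ has curvature $i\Theta_H=i\partial\bar\partial\psi\otimes\mathrm{Id}_{\mathbb{C}^N}$, which is of the form $\omega\otimes\mathrm{Id}$ with $\omega>0$ a Kähler form; such a curvature is simultaneously Nakano positive and dual Nakano positive, since for a diagonal form $\omega\otimes\mathrm{Id}$ both the Nakano quadratic form on $E\otimes T_X$ and its analogue for $E^*$ reduce to $\omega$ evaluated on the tensor factor and are manifestly positive definite.

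Next I would transport positivity from $X\times\mathbb{C}^N$ to $E$. Write $j\colon E\hookrightarrow E\oplus F$ for the inclusion as a holomorphic subbundle and let $h$ be the restriction $h:=j^*H$ of the metric $H$ above. The curvature decreases under restriction to a subbundle in the Griffiths order, but here we need the stronger Nakano statement; for this I would invoke the fact (standard, and used in Berndtsson--P\u{a}un type arguments) that a holomorphic subbundle of a Nakano-positive and dual-Nakano-positive bundle is again dual Nakano positive, and dually, a quotient bundle of such a bundle is Nakano positive. Since $E$ is both a subbundle and (via the splitting $E\oplus F\to E$) a quotient of the trivial bundle $X\times\mathbb{C}^N$, applying the subbundle statement to $E^*$ realized as a subbundle of $(X\times\mathbb{C}^N)^*$ — equivalently applying the quotient statement to $E$ — gives that $h$ is Nakano positive, while applying the subbundle statement directly to $E$ gives dual Nakano positivity. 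Alternatively, one can argue by the Chern--Weil curvature formula for a subbundle, $i\Theta_{(E,h)} = i\Theta_{(E\oplus F,H)}|_E + i\beta^*\wedge\beta$ with $\beta$ the second fundamental form: the extra term $i\beta^*\wedge\beta$ is Nakano seminegative but its contribution is controlled, and one absorbs it by scaling $\psi$ — since $E$ is of finite rank over a Stein manifold one can take $\psi$ large enough on each compact set and patch; this absorption step is where the real work lies.

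For the second assertion, that there is a smooth $h'$ on $E$ with $i\Theta_{h'}$ Nakano \emph{negative}, I would simply observe that $E^*$ is again a holomorphic vector bundle over the Stein manifold $X$, so by the first part it carries a metric $g$ with $i\Theta_g$ Nakano positive (and dual Nakano positive); then $h':=g^*$, the dual metric on $E^{**}=E$, has $i\Theta_{h'} = -\,{}^t(i\Theta_g)$ Nakano negative by the standard duality relating the curvature of a bundle and its dual. This needs no further estimate.

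The main obstacle is the transfer of \emph{Nakano} (as opposed to merely Griffiths) positivity across the subbundle/quotient operations: restriction of a Hermitian metric to a subbundle only improves curvature in the Griffiths sense, and the second fundamental form term $i\beta^*\wedge\beta$ that appears in the Gauss--Codazzi formula is Nakano-seminegative, so one cannot conclude Nakano positivity of $(E,j^*H)$ directly. The fix is to choose the plurisubharmonic weight $\psi$ growing fast enough that $i\partial\bar\partial\psi\otimes\mathrm{Id}$ dominates $-i\beta^*\wedge\beta$ in the Nakano sense on every compact subset — possible because $\beta$ is a fixed smooth form once the (holomorphic, hence fixed) embedding is chosen and $X$ is exhausted by compacts — together with the complementary quotient-bundle viewpoint to recover the dual-Nakano half; carrying out this domination estimate carefully, and checking it is compatible with the dual statement, is the technical heart of the argument.
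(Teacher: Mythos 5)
Your proposal is correct in substance, but it reaches the goal by a detour the paper does not take, and one of the two routes you offer for the key step does not work as stated. The paper's proof is a one-liner built on exactly the mechanism you call ``the technical heart'': fix an \emph{arbitrary} smooth Hermitian metric $h_0$ on $E$ (no trivialization needed; a partition of unity suffices) and a smooth strictly plurisubharmonic exhaustion $\phi$, and set $h=e^{-u(\phi)}h_0$. Then $i\Theta_h=i\Theta_{h_0}+i\partial\bar\partial\bigl(u(\phi)\bigr)\otimes\mathrm{Id}_E$ and $i\partial\bar\partial\bigl(u(\phi)\bigr)\geq u'(\phi)\,i\partial\bar\partial\phi$, so choosing $u$ convex increasing with $u'$ growing fast enough along the sublevel sets of $\phi$ dominates the fixed smooth tensor $i\Theta_{h_0}$ on every compact set, in both the Nakano and the dual Nakano sense. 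Your embedding $E\hookrightarrow E\oplus F\cong X\times\mathbb{C}^N$ via Lemma~\ref{trivial} buys nothing here: $j^*(e^{-\psi}\mathrm{Id})=e^{-\psi}\,(j^*\mathrm{Id})$ is precisely such a twist of the particular smooth metric $j^*\mathrm{Id}$, and the second fundamental form term you propose to absorb is just part of its (fixed, smooth) curvature. Your ``alternative'' Gauss--Codazzi-plus-domination argument is therefore sound and coincides with the paper's; your first suggested route does not. The inheritance principle it invokes is false: a holomorphic subbundle of a dual Nakano positive bundle need not be dual Nakano positive, since that statement is dual to ``quotients inherit Nakano negativity,'' which fails because the quotient correction term $i\beta\wedge\beta^*$ is Nakano \emph{semipositive} (what is true, dual to Proposition~\ref{dec}(c), is that quotients inherit dual Nakano positivity and subbundles inherit Nakano negativity). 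Moreover, the subbundle metric $j^*H$ and the quotient metric induced via $E\oplus F\to E$ are in general different metrics, since the holomorphic splitting need not be orthogonal, so one cannot combine a conclusion about each into a statement about a single $h$. Keep the absorption argument and drop the shortcut. Finally, your treatment of the second assertion --- apply the first part to $E^*$ and take the dual metric --- is correct, and is in fact cleaner than the paper's appeal to an isomorphism $E\cong E^*$ via Oka--Grauert, which is not available for bundles with nontrivial Chern classes.
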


\begin{proof}
We only need to prove the first statement since $E$ is isomorphic to its dual bundle $E^*$ as holomorphic vector bundles, by the Oka-Grauert principle.
We fix a smooth hermitian metric $h_0$ on $E$ at first,
and a exhaustive smooth strictly  plurisubharmonic function $\phi$ on $X$.
Then we can choose a smooth increasing convex function $u$, such that
the curvature of $h=e^{-u(\phi)}h_0$ is both Nakano positive and dual Nakano positive.
\end{proof}

We need the following proposition.
\begin{prop}\label{dec}[see Proposition 6.10 of \cite{D b}, P340]
Let $0\rightarrow S\rightarrow E\rightarrow Q\rightarrow 0$ be an exact sequence of hermitian
vector bundles, and $h$ is a smooth hermitian metric on $E$.  Give $S$ and $Q$ the induced metric by $E$.
Then

(a) If  $i\Theta_E$ is Griffith (semi-)positive, so is $i\Theta_Q$.

(b) If $i\Theta_E$ is Griffith (semi-)negative, so is $i\Theta_S$.

(c) If $i\Theta_E$ is Nakano (semi-)negative, so is $i\Theta_S$.

\end{prop}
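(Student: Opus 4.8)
The plan is to deduce all three statements from the classical second fundamental form computation for a short exact sequence of Hermitian bundles. First I would fix the $C^{\infty}$ (in general not holomorphic) orthogonal splitting $E \cong S \oplus S^{\perp}$ and identify $S^{\perp}$ with $Q$ as $C^{\infty}$ Hermitian bundles via the surjection $E \to Q$; since $S$ and $Q$ carry the metrics induced from $E$, this identification is isometric. With respect to this splitting the Chern connection of $E$ has the block form
\[
D_E = \begin{pmatrix} D_S & -\beta^{*} \\ \beta & D_Q \end{pmatrix},
\]
where $\beta \in C^{\infty}(X,\Lambda^{1,0}T^{*}_X\otimes\Hom(S,Q))$ is the second fundamental form of $S$ in $E$ and $\beta^{*}$ its adjoint, a $(0,1)$-form with values in $\Hom(Q,S)$. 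Squaring and reading off the two diagonal blocks of $\Theta_E = D_E^{2}$ then yields the two standard identities
\[
\Theta_E|_S = \Theta_S - \beta^{*}\wedge\beta, \qquad \Theta_E|_Q = \Theta_Q - \beta\wedge\beta^{*},
\]
where $\Theta_E|_S$ and $\Theta_E|_Q$ denote the components of $\Theta_E$ acting on $S$, resp.\ on $Q$.

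Next I would pin down the signs of the correction terms. Writing $\beta = \sum_j \beta_j\,dz_j$ locally with $\beta_j\in\Hom(S,Q)$, a direct expansion gives, for $\xi = \sum_j\xi_j\partial_{z_j}\in T_X$, $s\in S$ and $q\in Q$,
\[
\bigl\langle -i\beta^{*}\wedge\beta\,(\xi,\bar\xi)\,s,s\bigr\rangle = \Bigl|\sum_j \xi_j\beta_j s \Bigr|^{2}_{Q} \geq 0, \qquad \bigl\langle i\beta\wedge\beta^{*}\,(\xi,\bar\xi)\,q,q\bigr\rangle = \Bigl|\sum_j \xi_j\beta_j^{*}q \Bigr|^{2}_{S} \geq 0,
\]
so that $i\beta^{*}\wedge\beta$ is Griffiths seminegative and $i\beta\wedge\beta^{*}$ is Griffiths semipositive. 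Moreover, contracting $\beta$ with a tensor $\tau = \sum_j \partial_{z_j}\otimes\tau_j\in T_X\otimes S$, the same computation shows that the Nakano quadratic form of $i\beta^{*}\wedge\beta$ at $\tau$ equals $-\bigl|\sum_j\beta_j\tau_j\bigr|^{2}_{Q}\leq 0$; hence $i\beta^{*}\wedge\beta$ is in fact Nakano seminegative, not merely Griffiths seminegative.

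It then remains to assemble the three conclusions, which is formal. For (b): if $i\Theta_E$ is Griffiths (semi)negative, then so is its component $\Theta_E|_S$ (simply restrict the curvature form to $S\subset E$), and since $i\beta^{*}\wedge\beta$ is Griffiths seminegative, $i\Theta_S = \Theta_E|_S + i\beta^{*}\wedge\beta$ is Griffiths (semi)negative. For (c): the same identity, now with both summands Nakano seminegative, shows $i\Theta_S$ is Nakano (semi)negative. For (a): $i\Theta_Q = \Theta_E|_Q + i\beta\wedge\beta^{*}$ is a sum of two Griffiths (semi)positive terms, hence Griffiths (semi)positive. (One could also get (b) directly from the characterization "$\|u\|^{2}_h$ plurisubharmonic for every local holomorphic section $u$", since a holomorphic section of $S$ is a holomorphic section of $E$ of the same pointwise norm, and then obtain (a) by dualizing via $(Q^{*},\text{metric induced from }E^{*})\hookrightarrow E^{*}$; but (c) really seems to need the second fundamental form.) The hard part will be the sign bookkeeping for $\beta$ — in particular checking that $i\beta^{*}\wedge\beta$ is Nakano and not merely Griffiths seminegative, which is precisely what makes (c) go through. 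Correspondingly there is no Nakano analogue of (a) for the quotient, since $i\beta\wedge\beta^{*}$ is only Griffiths, not Nakano, semipositive in general.
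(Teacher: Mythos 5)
Your argument is correct and is essentially the proof the paper itself relies on: the paper gives no proof of Proposition~\ref{dec} but cites Demailly's book, whose proof is exactly this second-fundamental-form computation for the orthogonal $C^\infty$ splitting, including the key point that $i\beta^{*}\wedge\beta$ is Nakano (not merely Griffiths) seminegative while $i\beta\wedge\beta^{*}$ is only Griffiths semipositive. The one harmless slip is a conjugation: the second square should read $\bigl|\sum_j\bar\xi_j\beta_j^{*}q\bigr|^{2}_{S}$.
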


B. Berndtsson and M. P\v{a}un \cite{BP} proved the following theorem in the case that
$h$ is negatively curved in the sense of Griffiths by convolution in an approximate way.
H. Raufi \cite{R} observed that the same technique yields
a similar result when $h$ is negatively curved in the sense of Nakano.

\begin{thm}[see \cite{BP} and \cite{R}]\label{BPR}
Let $h$ be a singular hermitian metric on a trivial vector bundle $E$ over a polydisc,
and assume that $h$ is negatively curved in the sense of Griffiths (resp. Nakano).
There exists a sequence of smooth hermitian metrics $\{h_\nu\}_{\nu=1}^\infty$
with negative curvature in the sense of Griffiths (resp. Nakano), decreasing to $h$ pointwise on any smaller polydisc.
\end{thm}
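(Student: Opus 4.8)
The plan is to approximate $h$ by mollification, using that the polydisc is translation–invariant together with an integral version of Lemma~\ref{add}. Fix a smaller polydisc $\Delta'\Subset\Delta$, a radially symmetric $\rho\in C_c^\infty(\mathbb{C}^n)$ with $\int\rho\,dV=1$, and set $\rho_\epsilon(a):=\epsilon^{-2n}\rho(a/\epsilon)$. For $\epsilon$ small enough that $z+a\in\Delta$ whenever $z\in\overline{\Delta'}$ and $|a|<\epsilon$, define, entry by entry in the fixed trivialization $E=\Delta\times\mathbb{C}^r$,
\[
h_\epsilon(z):=\int_{|a|<\epsilon}h(z+a)\,\rho_\epsilon(a)\,dV(a).
\]
Griffiths (resp.\ Nakano) negativity forces the entries of $h$ to be locally bounded (a nonnegative plurisubharmonic function is locally bounded above, and the off-diagonal entries are dominated by the diagonal ones via Cauchy--Schwarz; in the Nakano case, where this need not follow automatically, one adds, as in \cite{R}, the hypothesis that the entries lie in $L^1_{loc}$), so $h_\epsilon$ is a genuine smooth Hermitian metric on $\Delta'$.

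The first step is that each translate $h^{(a)}(z):=h(z+a)$ is again negatively curved in the sense of Griffiths (resp.\ Nakano) on $\Delta'$. For the trivial bundle a holomorphic section of $E$ is just a $\mathbb{C}^r$-valued holomorphic function, so composition with the biholomorphism $\tau_a\colon z\mapsto z+a$ carries holomorphic sections over $\Delta'+a$ to holomorphic sections over $\Delta'$; combined with the fact that a holomorphic pullback of a plurisubharmonic function is plurisubharmonic, this gives that $\|u\|^2_{h^{(a)}}=\|u(\cdot-a)\|^2_h\circ\tau_a$ is plurisubharmonic for every holomorphic section $u$, i.e.\ $h^{(a)}$ is Griffiths negative. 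In the Nakano case the same bookkeeping applies, because the forms $\widehat{dz_j\wedge d\bar z_k}$ are invariant under translation: for an $n$-tuple $v$ of holomorphic sections, $T^{h^{(a)}}_v=\tau_a^*\bigl(T^{h}_{w}\bigr)$ with $w:=(v_1(\cdot-a),\dots,v_n(\cdot-a))$, and hence $i\partial\bar\partial T^{h^{(a)}}_v=\tau_a^*\bigl(i\partial\bar\partial T^{h}_{w}\bigr)\geq0$.

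The second step is that $h_\epsilon$ inherits the negativity. In the Griffiths case, for every holomorphic section $u$ one has $\|u\|^2_{h_\epsilon}=\int_{|a|<\epsilon}\|u\|^2_{h^{(a)}}\,\rho_\epsilon(a)\,dV(a)$, an integral average of plurisubharmonic functions; since the left side is in fact smooth (being $\|u\|^2$ of a smooth metric), it is plurisubharmonic as soon as it satisfies the sub-mean-value inequality on each complex line, which follows from Fubini. This is the integral version of Lemma~\ref{add}$(1)$. In the Nakano case, since $h\mapsto T^h_v$ is linear, $T^{h_\epsilon}_v=\int_{|a|<\epsilon}T^{h^{(a)}}_v\,\rho_\epsilon(a)\,dV(a)$, and pairing with a nonnegative test function $\chi$ gives $\langle i\partial\bar\partial T^{h_\epsilon}_v,\chi\rangle=\int_{|a|<\epsilon}\langle i\partial\bar\partial T^{h^{(a)}}_v,\chi\rangle\,\rho_\epsilon(a)\,dV(a)\geq0$ (Fubini, using the local integrability of the entries of $h$ and $\rho_\epsilon\geq0$); this is the integral version of Lemma~\ref{add}$(2)$.

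The last step is monotone pointwise convergence. For each constant vector $a_0\in\mathbb{C}^r$ the function $\varphi_{a_0}(z):=\|a_0\|^2_{h(z)}$ is plurisubharmonic, and $\langle h_\epsilon(z)a_0,a_0\rangle=(\varphi_{a_0}*\rho_\epsilon)(z)$ decreases to $\varphi_{a_0}(z)$ at every point as $\epsilon\downarrow0$, by the standard property of convolutions of plurisubharmonic functions with a radial mollifier. As this holds for all $a_0$, the Hermitian matrices $h_\epsilon(z)$ decrease in the Loewner order and converge to $h(z)$ at every point of $\Delta'$. Taking $h_\nu:=h_{\epsilon_\nu}$ for a sequence $\epsilon_\nu\downarrow0$, and letting $\Delta'$ run through an exhaustion of $\Delta$, completes the argument. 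I expect the only real subtlety to be in the Nakano case: ensuring that $T^h_v$ is a well-defined current at all (which is what forces the local integrability hypothesis on the entries of $h$, automatic under Griffiths negativity but not in general) and that the displayed current identities and the interchange with $\partial\bar\partial$ are legitimate.
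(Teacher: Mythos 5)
Your mollification argument is exactly the convolution technique that the paper attributes to \cite{BP} and \cite{R}; the paper itself only cites Theorem~\ref{BPR} without reproving it, and your write-up (translation invariance, integral averaging of plurisubharmonic functions resp.\ of the currents $T^h_v$, and monotone decrease via a radial kernel) is a correct reconstruction of that standard proof. The points you flag as subtleties --- local integrability of the entries so that $h*\rho_\epsilon$ and $T^h_v$ make sense in the Nakano case --- are precisely the ones addressed in \cite{R}, so nothing is missing.
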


%
%
%

We now give the proof of Theorem \ref{thm c g}.
\begin{thm}[=Theorem \ref{thm c g}]
Let $(E,h)$ be a singular hermitian holomorphic vector bundle over a Stein manifold $X$.
Assume $h$ is negatively curved in the sense of Griffiths (resp. Nakano).
Then there exists a sequence of smooth hermitian metrics $\{h_k\}_{k=1}^\infty$ with negative curvature in the sense of Griffiths (resp. Nakano),
such that for any compact subset $K\subset X$, there is $k(K)$, for $k\geq k(K)$, $h_k$ decrease to $h$
pointwise on $K$.
\end{thm}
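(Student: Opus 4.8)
The plan is to reduce the global statement on a Stein manifold to the local model (a polydisc with trivial bundle) handled by Theorem \ref{BPR}, using a good cover and a gluing procedure that preserves curvature negativity. First I would invoke Lemma \ref{trivial} to find a holomorphic vector bundle $F$ with $E\oplus F$ trivial of rank $N$. A singular metric $h$ on $E$ that is Griffiths (resp. Nakano) negative can be combined with a smooth Nakano-negative (hence Griffiths-negative) metric $h_F$ on $F$ provided by Lemma \ref{metric}; by Lemma \ref{add} the orthogonal sum $h\oplus h_F$ is Griffiths (resp. Nakano) negative on $E\oplus F$. If I can approximate $h\oplus h_F$ by smooth metrics $H_k$ on the trivial bundle with the same negativity, then restricting the induced metric to the holomorphic subbundle $E\hookrightarrow E\oplus F$ and applying Proposition \ref{dec}(b) (resp. (c)) yields smooth metrics on $E$ with the desired negativity, decreasing to $h$ on compacts once we check the restriction commutes with the decreasing limit. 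So it suffices to treat the case $E = X\times\mathbb{C}^N$ trivial.

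For the trivial bundle, I would cover $X$ by a locally finite family of open sets $\{U_j\}$, each biholomorphic to a polydisc, and choose slightly shrunken polydiscs $\{U_j'\}$ still covering $X$, together with a smooth partition of unity $\{\rho_j\}$ subordinate to $\{U_j'\}$ with $\sum_j \rho_j \equiv 1$. On each $U_j$ apply Theorem \ref{BPR} to get smooth metrics $h^{(j)}_\nu$ on $U_j\times\mathbb{C}^N$ with Griffiths (resp. Nakano) negativity, decreasing pointwise to $h$ on every relatively compact subpolydisc, in particular on $U_j'$. The natural idea is to set $h_k = \sum_j \rho_j\, h^{(j)}_{\nu(j,k)}$ for a suitable choice of indices $\nu(j,k)\to\infty$; but a convex combination of Griffiths-negative metrics need not be Griffiths-negative, since $\|u\|^2_{h_k} = \sum_j \rho_j \|u\|^2_{h^{(j)}}$ is a combination with non-plurisubharmonic coefficients $\rho_j$. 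To get around this I would instead work additively: since $\|u\|^2$ being plurisubharmonic is preserved under \emph{sums} (Lemma \ref{add}) but not under multiplication by cutoffs, the cleaner route is to use the exhaustion of $X$ by Stein compacts and a Richberg-type/glueing-by-max or, better, the standard trick of writing the global metric as a \emph{locally finite sum} of metrics each supported near one chart after multiplying by $e^{-\chi_j}$ where $\chi_j$ is plurisubharmonic and very negative off $U_j$ — this replaces the partition of unity by plurisubharmonic weights so that negativity is preserved.

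More concretely, the key step I expect to be the main obstacle is converting the local approximations into a single global smooth metric while keeping the curvature condition. I would fix an exhaustion $X = \bigcup_m X_m$ by relatively compact Stein open sets with $\overline{X_m}\subset X_{m+1}$, each $X_m$ a finite union of the polydisc charts. On $\overline{X_m}$ only finitely many charts are relevant; there one can take a finite convex-combination-free construction: choose smooth strictly plurisubharmonic exhaustions and large constants $C_j$ so that the functions $\psi_j := C_j(\phi - \sup_{\overline{X_m}\setminus U_j'}\phi)$ are plurisubharmonic, negative on a neighborhood of $\mathrm{supp}\,\rho_j\cap \overline{X_m}$, and tend to $-\infty$ rapidly outside $U_j$; then $\sum_j e^{\psi_j} h^{(j)}_{\nu}$ is a finite sum of Griffiths (resp. Nakano) negative smooth metrics on $X_m\times\mathbb{C}^N$ by Lemma \ref{add}, and for $\nu$ large and $C_j$ large it is $\leq (1+\varepsilon)h$ and $\geq (1-\varepsilon)h$ on $X_{m-1}$. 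Finally a diagonal argument over $m$, combined with an adjustment $h_k \mapsto (1+\varepsilon_k)h_k$ and the standard Dini-type monotonization to force the sequence to be decreasing on each compact set, produces the required sequence $\{h_k\}$. The delicate point throughout is that "Griffiths negative" is not a convex cone condition under multiplication by general positive functions, so all gluing must be arranged through genuinely plurisubharmonic weights and additions, which is where the bulk of the careful estimates will lie; the Nakano case runs in parallel, using the current inequality $i\partial\bar\partial T^h_v\geq 0$ in place of plurisubharmonicity of $\|u\|^2_h$, together with Proposition \ref{dec}(c) for the restriction step.
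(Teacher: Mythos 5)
Your first reduction (pass to $E\oplus F$ trivial via Lemma \ref{trivial}, approximate the direct sum metric, restrict back to $E$ using Proposition \ref{dec}) is exactly the paper's first step and is fine. The gap is in the globalization step for the trivial bundle. You correctly identify that a partition of unity destroys negativity and propose to replace it by plurisubharmonic weights $e^{\psi_j}$ that are of order $1$ on $U_j'$ and ``tend to $-\infty$ rapidly outside $U_j$''. Such functions do not exist: if $U_j\subset\subset X$ and $\psi_j$ is plurisubharmonic on a neighborhood of $\overline{U_j}$ with $\psi_j\leq -C$ on $\partial U_j$, then by the maximum principle $\sup_{U_j}\psi_j=\sup_{\partial U_j}\psi_j\leq -C$, so $\psi_j\leq -C$ on $U_j'$ as well. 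A plurisubharmonic weight cannot be concentrated on a relatively compact chart; your explicit formula $\psi_j=C_j(\phi-\sup_{\overline{X_m}\setminus U_j'}\phi)$ illustrates this, since it is simply $\leq 0$ on all of $\overline{X_m}$ and does not distinguish $U_j'$ from the other charts. Consequently $\sum_j e^{\psi_j}h^{(j)}_\nu$ is indeed negatively curved (Lemma \ref{add}), but it cannot be made close to $h$ anywhere, and the whole chart-by-chart gluing collapses. (The regularized-max alternative you mention also fails here: the pointwise maximum of two quadratic forms is not a quadratic form, so there is no Richberg gluing for metrics of rank $\geq 2$.)

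The paper circumvents this by never gluing local charts at all. One embeds $X$ properly into $\mathbb{C}^N$, takes a Stein neighborhood $U$ of $X$ with a holomorphic retraction $\pi:U\to X$ (Siu), pulls $h$ back to $U$, and convolves $h\circ\pi$ with a mollifier in the ambient $\mathbb{C}^N$; by the argument behind Theorem \ref{BPR} this produces a single smooth negatively curved metric on a neighborhood $V_k$ of an arbitrarily large compact $X_k=\{z\in X:|z|\leq k\}$. The only gluing needed is between $V_k$ and the rest of $\mathbb{C}^N$, and there the cutoff $\chi_k$ is corrected by a factor $e^{v_k(u_k)}$ with $u_k=\max_{\delta_k}\{|z|^2-k^2-\delta_k,0\}+|f|^2$: this weight is plurisubharmonic, \emph{vanishes identically on $X_k$} (so it does not disturb the approximation on the compact set one cares about), and is strictly plurisubharmonic on the transition region $\overline{V_k}\setminus U_k$, where $v_k$ can be chosen to grow fast enough to absorb the curvature created by $\chi_k$. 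The maximum-principle obstruction is avoided precisely because, for each fixed $k$, the correction is trivial on $X_k$ and all the damage is pushed to a region disjoint from $X_k$; letting $k\to\infty$ and restricting to $X$ gives the sequence $h_k$. If you want to repair your argument, you should replace the locally finite atlas by this single ambient smoothing per compact set.
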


\begin{proof}
We give the proof for the case of Griffiths negativity.
The proof for the case of Nakano negativity is similar.

From Lemma \ref{trivial}, there is a holomorphic vector bundle $F$ on $X$,
such that $E\oplus F$ is a trivial holomorphic vector bundle.
As the restriction of trivial hermitian metric of trivial vector bundle on $F$
 which denoted by $g$ is Griffiths negative,
so $(h,g)$ is negatively curved in the sense of Griffiths.
If we get an approximation of $(h,g)$ by hermitian metrics with Griffiths negative curvatures,
by Proposition \ref{dec}, the restriction of the hermitian metrics to $E$ which are Griffiths negative
is an approximation of $h$.
Without of loss generality, we may assume that $E$ is the trivial vector bundle $X\times \mathbb{C}^r$,
and $h$ is a hermitian matrix function on $X$.

By the imbedding theorem, there is a proper holomorphic imbedding $\iota:X\rightarrow\mathbb{C}^N$.
So we may view $X$  as a closed complex submanifold of $\mathbb{C}^N$.
There exist finitely many holomorphic functions $f_1,\cdots,f_m$ on $\mathbb{C}^N$,
such that $\{z\in\mathbb{C}^N:f_1(z)=\cdots=f_m(z)=0\}=X$. Set $|f|^2=\sum_{j=1}^{m}|f_j|^2$.
By Cor. 1 of \cite{S}, there is a Stein open neighborhood $U\subset\mathbb{C}^N$ of $X$,
and a holomorphic retraction $\pi:U\rightarrow X$.
Hence, $\pi^*(h)=h\circ\pi$ is a singular hermitian metric on $U\times \mathbb{C}^r$,
 which is an extension of $h$ and negatively curved in the sense of Griffiths.

We will construct $h_k$ by induction.
For $z\in\mathbb{C}^N$, set $|z|=\sqrt{\sum_{j=1}^{N}|z_j|^2}$, and
$X_k=\{z\in X:|z|\leq k\}$. Denote $d_k=dist(X_k,\partial U)$.
Let $\rho$ be a smooth radical function on $\mathbb{C}^N$, $\rho\geq 0$,
 $\text{supp}\ \rho\subset B_1$ and $\int_{\mathbb{C}^n}\rho dV=1$.
Set $\rho_\epsilon(z):=\frac{\rho(z/\epsilon)}{\epsilon^{2N}}$ for $\epsilon>0$.
Hence by Theorem \ref{BPR}, $(h\circ\pi)\ast\rho_{\epsilon_k}$ is Griffiths negative on
$$V_k:=\{z\in\mathbb{C}^N:dist(z,X_k)<d_k-\epsilon_k\}$$ for $\epsilon_k<d_k$.
We may choose $\{\epsilon_k\}$ inductively, such that $\epsilon_k$ is decreasing to 0,
as $k\rightarrow\infty$.
Let $\chi_k$ be a smooth function, such that $\chi_k\equiv 1$ in a neighborhood $U_k$ of $X_k$,
and $\text{supp}\chi_k\subset V_k$. Let $\delta_k=\frac{1}{4}dist(X_k,\partial U_k)>0$,
and $$u_k(z)=max_{\delta_k}\{|z|^2-k^2-\delta_k,0\}+|f(z)|^2,$$
where $\max_{\delta}\{x,y\}$ means the convolution of the maximum function (see \cite{D b}).
Notice that
\begin{equation}\label{in1}
 u_k|_{X_k}=0\; \text{and}\;u_k(z)=|z|^2-k^2-\delta_k+|g(z)|^2\geq c_k,\; \text{for}\; z\in U_k^c
\end{equation} for some constant $c_k>0$.

\textbf{Claim:} There is a sequence of smooth increasing convex functions $\{v_k\}$, with $v_k(0)=0$,
such that $$\tilde{h}_k:=e^{v_k(u_k)}(\chi_k(h\circ\pi)\ast\rho_{\epsilon_k}+\epsilon_k I_r)$$
is a Griffiths negative hermitian metric for the trivial vector bundle $\mathbb{C}^N\times\mathbb{C}^r$.

If the claim is proved, let $h_k=\tilde{h}_k|_X$, $h_k$ is Griffiths negative,
and $$h_k|_{X_k}=(h\circ\pi)\ast\rho_{\epsilon_k}|_{X_k}+\epsilon_k I_r\; \text{for}\; k\geq1.$$
As $\epsilon_k$ decreases to 0, then $\{h_k\}$ satisfies all the conditions.

We will prove the claim.

Note that,
$$i\Theta_{h_k}=i\Theta_{\chi_k(h\circ\pi)\ast\rho_{\epsilon_k}|_{X_k}+\epsilon_k I_r}-i\partial\bar{\partial}v_k(u_k)\otimes I_r.$$
From Proposition \ref{add},
$\chi_k(h\circ\pi)\ast\rho_{\epsilon_k}|_{X_k}+\epsilon_k I_r$ is Griffiths negative on $U_k$.
As $\chi_k=0$ on $V_k^c$, we only need to consider $V_k\backslash U_k$.
Since $\chi_k(h\circ\pi)\ast\rho_{\epsilon_k}|_{X_k}+\epsilon_k I_r$ is a smooth hermitian metric,
$\overline{V_k}\backslash U_k$ is compact,
and $u_k$ is strictly plurisubharmonic on $\overline{V_k}\backslash U_k$,
we can find a smooth increasing convex function $v_k$, such that $v_k(0)=0$
and $\tilde{h}_k$ is Griffiths negative.

\end{proof}

\begin{rem}
  By the dual proposition, we can also get similar result for the approximation of singular metric
 which is
  positively curved in the sense of Griffiths or dual Nakano positive on Stein manifold
   with increasing sequence of metrics with Griffiths positive or dual Nakano positive curvatures.
\end{rem}




\section{extensions of hermitian metrics}

In this section we give the proof of Theorem \ref{thm ext G}.

\begin{thm}[=Theorem \ref{thm ext G}]\label{thm ext G 1}
Let $E$ be a  holomorphic vector bundle over a Stein manifold $X$, and $Y$ be a closed submanifold of $X$.
Assume that $h$ is a smooth hermitian metric on $E|_Y$ with negative curvature in the sense of Griffiths (resp. Nakano),
then $h$ can be extended to a smooth hermitian metric $\tilde{h}$ on $E$  with negative curvature in the sense of Griffiths (resp. Nakano).
\end{thm}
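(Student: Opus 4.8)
The plan is to reduce to the trivial-bundle situation, extend the metric naively, and then correct the extension so that its curvature becomes negative, paralleling the structure of the proof of Theorem~\ref{thm c g}. First, using Lemma~\ref{trivial}, choose a holomorphic vector bundle $F$ on $X$ with $E\oplus F$ trivial. On $Y$ the metric $h$ together with (the restriction to $Y$ of) the trivial metric $g$ on $F$ gives a metric $(h,g)$ on $(E\oplus F)|_Y$ that is still Griffiths (resp. Nakano) negative by Lemma~\ref{add}, and if we extend $(h,g)$ to a Griffiths (resp. Nakano) negative metric on $E\oplus F$ over $X$, then by Proposition~\ref{dec}(b) (resp. (c)) its restriction to the subbundle $E$ is again negative in the appropriate sense and restricts to $h$ on $Y$. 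So we may assume $E=X\times\mathbb{C}^r$ is trivial and $h$ is a Hermitian matrix valued function on $Y$.

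Next I would produce some smooth extension. Since $X$ is Stein and $Y$ is a closed submanifold, the entries of $h$ (which are smooth functions, not holomorphic) extend to smooth functions on $X$; shrinking if necessary and adding $\epsilon I_r$ we may arrange a smooth Hermitian metric $h_0$ on $E$ over a neighborhood of $Y$, and in fact over all of $X$, with $h_0|_Y=h$. Alternatively, and more cleanly, one can use a holomorphic retraction $\pi:U\to Y$ from a Stein neighborhood $U$ of $Y$ in $X$ (Siu's theorem, as in the proof of Theorem~\ref{thm c g}), set $\pi^*h=h\circ\pi$ on $U\times\mathbb{C}^r$, which is Griffiths (resp. Nakano) negative and restricts to $h$ on $Y$, then patch with a globally defined smooth negative metric $h'$ on $E$ (Lemma~\ref{metric} furnishes a Nakano-negative, hence Griffiths-negative, $h'$) via a cutoff $\chi$ supported in $U$ with $\chi\equiv 1$ near $Y$: the candidate is $\chi\,\pi^*h+(1-\chi)h'$, but this sum need not be negative in the transition region.

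The curvature correction is the heart of the argument. Take a smooth nonnegative function $\psi$ on $X$ with $\psi^{-1}(0)=Y$ and $\psi$ strictly plurisubharmonic transversally to $Y$; since $X$ is Stein one can take $\psi=|f|^2$ for holomorphic $f=(f_1,\dots,f_m)$ cutting out $Y$, possibly plus an exhaustion term $\max_{\delta}\{\varphi-c,0\}$ as in the proof of Theorem~\ref{thm c g} to gain strict plurisubharmonicity away from a neighborhood of $Y$. On the compact pieces of $X$ exhausted away from $Y$, where the patched metric fails to be negative, $\psi$ is strictly plurisubharmonic, so for a sufficiently convex increasing $v$ with $v(0)=0$ the metric $\tilde h=e^{v(\psi)}\bigl(\chi\,\pi^*h+(1-\chi)h'\bigr)$ has curvature $i\Theta_{\tilde h}=i\Theta_{\text{patched}}-i\partial\bar\partial v(\psi)\otimes I_r$ which is forced to be $\geq 0$; near $Y$ we have $\psi=0$ and $\chi\equiv1$, so $\tilde h=\pi^*h$ there and restricts to $h$ on $Y$. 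One arranges the convexity on an exhaustion $X_1\Subset X_2\Subset\cdots$ by increasing $v$ on each shell, exactly as the function $v_k$ is built in the proof of Theorem~\ref{thm c g}.

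The main obstacle is the same one handled there: controlling the curvature on the (noncompact) locus where the two metrics being patched disagree, in particular making sure that $\psi$ is strictly plurisubharmonic there so the convexification trick bites — near $Y$ we lose strict plurisubharmonicity of $|f|^2$ in the directions tangent to $Y$, which is why $\chi$ must be identically $1$ on a full neighborhood of $Y$ where $\psi$ is allowed to degenerate, and the correction term only needs to act outside that neighborhood. The Nakano case is identical, using part (c) of Proposition~\ref{dec} and the fact (Lemma~\ref{add}(2)) that Nakano negativity is preserved under sums and under multiplication by $e^{\text{psh}}\otimes I_r$.
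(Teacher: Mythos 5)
Your overall strategy matches the paper's: reduce to the trivial bundle $E\oplus F$ via Lemma \ref{trivial} and Proposition \ref{dec}, pull $h$ back by a holomorphic retraction $\pi:U\to Y$, patch with a cutoff $\chi$, and then correct the curvature in the transition region by a conformal weight. The gap is in the correction step. You propose the weight $e^{v(\psi)}$ with $v$ convex increasing, $v(0)=0$, and $\psi$ equal to $|f|^2$, possibly plus an exhaustion term $\max_\delta\{\varphi-c,0\}$. Neither choice closes the argument. If $\psi$ contains the exhaustion term, then $\psi$ fails to vanish on the (in general noncompact) submanifold $Y$ wherever $\varphi>c-\delta$, so $e^{v(\psi)}\neq 1$ there and $\tilde h|_Y\neq h$; the analogous function $u_k$ in the proof of Theorem \ref{thm c g} only has to vanish on the compact piece $X_k$, whereas here the weight must vanish on all of $Y$. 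If instead $\psi=|f|^2$ alone, the weight does restrict to $1$ on $Y$, but the domination fails: the transition region $\mathrm{supp}\,\chi\setminus V$ is closed but noncompact, $|f|^2$ is not an exhaustion of it (its infimum there can be $0$ and its level sets are noncompact), and $i\partial\bar\partial|f|^2$ can degenerate away from $Y$; so there is no single convex $v$ making $v'(\psi)\,i\partial\bar\partial\psi$ dominate $i\Theta_{h'}$ ``shell by shell'' --- the shells of the exhaustion are not distinguished by the variable $v$ depends on.

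The paper resolves exactly this tension by taking a product rather than a composition: the weight is $e^{|f|^2e^{u(\phi)}}$, which equals $1$ on $Y$ because $|f|^2$ vanishes there, while on $V^c$ one computes
$i\partial\bar\partial\bigl(|f|^2e^{u(\phi)}\bigr)\geq |f|^2e^{u(\phi)}u'(\phi)\,i\partial\bar\partial\phi\geq v(\phi)\,i\partial\bar\partial\phi$,
the last inequality using the two calibrating conditions $1/|f|^2\leq v(\phi)$ on $V^c$ and $u'\geq v^2$, with $v$ also chosen so that $i\Theta_{h'}\leq v(\phi)\,i\partial\bar\partial\phi\, I_r$. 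Note that the domination is measured against $i\partial\bar\partial\phi$, which is strictly positive everywhere, not against $i\partial\bar\partial|f|^2$. To repair your proof you would need to replace $e^{v(\psi)}$ by a weight of this product form (or otherwise exhibit a plurisubharmonic function vanishing exactly on $Y$ whose complex Hessian admits an exhaustion-calibrated positive lower bound on $V^c$).
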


\begin{proof}
We give the proof for the case of Griffiths negativity.
The proof for the case of Nakano negativity is similar.

From Lemma \ref{trivial}, there is a holomorphic vector bundle $F$ on $X$,
such that $E\oplus F$ is the trivial holomorphic vector bundle $X\times\mathbb{C}^r$.
The restriction of trivial hermitian metric of trivial vector bundle on $F$
 which denoted by $g$ is Griffiths negative.

As $Y$ is a closed submanifold of the Stein manifold $X$,
there exists an open neighborhood $U$ of $Y$ in $X$,
with a holomorphic retraction $\pi:U\rightarrow Y$.
So we get a smooth hermitian metric $(h\circ\pi,g)$ on $(E\oplus F)|_U$ with negative Griffiths curvature.

Let $\chi$ be a smooth function satisfying $\chi\equiv1$ in an open neighborhood $V$ of $Y$
and $\text{supp}\ \chi\subset U$. Set $$h'=\chi(h\circ\pi,g)+(1-\chi)I_r,$$
then $h'$ is a smooth Hermitian metric on $E\oplus F$.
As $h'|_V=(h\circ\pi,g)$, $h'$ is Griffiths negative on $V$.
So we will focus on $V^c:=X\setminus V$.

There exist finitely many holomorphic functions $f_1,\cdots,f_m$ on $X$,
such that $$Y=\{z\in X:f_1(z)=\cdots=f_m(z)=0\}.$$
Let $|f|^2=\sum_{j=1}^{m}|f_j|^2$, and
$\phi\geq0$ be a smooth strictly plurisubharmonic exhaustion function on $X$.
There is a continuous function $v$ on $[0,\infty)$, such that
\begin{equation}\label{ex in1}
 i\Theta_{h'}\leq v(\phi)i\partial\bar{\partial}\phi I_r
\end{equation}
and
\begin{equation}\label{ex in2}
\frac{1}{|f|^2}\leq v(\phi) \;\;\text{on}\;\; V^c.
\end{equation}

Let $u$ be a smooth increasing convex function on $(-1,\infty)$,
such that
\begin{equation}\label{ex in3}
  u(t)\geq0 \;\;\text{and}\;\;u'(t)\geq v^2(t)\;\;\forall t\in[0,\infty).
\end{equation}

Let $$\tilde{h}=e^{|f|^2e^{u(\phi)}}h',\;\;
\text{and}\;\; \psi=\log|f|^2+u(\phi).$$
Then $\tilde{h}$ is a smooth hermitian metric on $E\oplus F$, and
$$\tilde{h}|_Y=h'|_Y=(h,g|_Y).$$
Since $\psi$ is plurisubharmonic,
we have that $e^{\psi}=|f|^2e^{u(\phi)}$ is also plurisubharmonic.
As
\begin{equation}\label{ex in4}
   i\Theta_{\tilde{h}}
   =i\Theta_{h'}-i\partial\bar{\partial}(|f|^2e^{u(\phi)})I_r,
\end{equation}
and $h'$ is Griffiths negative on $V$,
Therefore, $\tilde{h}$ is also Griffiths negative on $V$.
On $V^c$, notice
\begin{equation}\label{ex in5}
\begin{split}
  i\partial\bar{\partial}(|f|^2e^{u(\phi)})
  =&i\partial\bar{\partial}e^{\psi}\\
  =&e^{\psi}(i\partial\bar{\partial}\psi+i\partial\psi\wedge\bar{\partial}\psi)\\
  \geq&|f|^2e^{u(\phi)}i\partial\bar{\partial}(\log|f|^2+u(\phi))\\
  \geq&|f|^2e^{u(\phi)}u'(\phi)i\partial\bar{\partial}\phi\\
  \geq&v(\phi)i\partial\bar{\partial}\phi,
  \end{split}
\end{equation}
the last inequality holds form inequality \ref{ex in2} and \ref{ex in3}.

From inequality \ref{ex in1}, \ref{ex in4} and \ref{ex in5},
we have that $\tilde{h}$ is Griffiths semi-negative on $V^c$.
Therefore, $\tilde{h}$ is is Griffiths semi-negative on $X$.
Let $$\iota:E\rightarrow E\oplus F,\;\;\iota(\alpha)=(\alpha,0).$$
$\iota^*(\tilde{h})$ is a smooth Griffiths semi-negative hermitian metric
 on $E$ by Proposition \ref{dec}, with $\iota^*(\tilde{h})|_Y=h$.

\end{proof}

%

\begin{rem}
For the extension theorems, if the metric being extended is Griffiths nagative or Nakano negative,
we can get a Griffiths negative or Nakano negative extension metric. The proof is almost same as above,
if one notice that there is a sequence of holomorphic functions $\{f_j\}$ on $X$,  such that
$|f|^2:=\sum_{j=1}^{\infty}|f_j|^2$ converges uniformly on compact subsets of $X$, $Y=\{|f|^2=0\}$,
and $i\partial\bar{\partial}|f|^2+\omega_Y>0$ on every point $p\in Y\subset X$ for any given K\"{a}hler form $\omega_Y$ on $Y$.
\end{rem}

\begin{thm}\label{thm ext G 2}
Let $E$ be a  holomorphic vector bundle over a Stein manifold $X$, and $Y$ be a closed submanifold of $X$.
Assume $h$ is a smooth hermitian metric on $E|_Y$ which is Nakano (semi-)positive.
Then $h$ can be extended to a smooth hermitian metric $\tilde{h}$ on $E$ which is Nakano (semi-)positive.
\end{thm}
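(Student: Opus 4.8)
I would follow the scheme of the proof of Theorem~\ref{thm ext G 1}, with two adjustments forced by the fact that Nakano positivity behaves differently from Nakano negativity. First, one cannot trivialize: Proposition~\ref{dec} propagates Nakano negativity to subbundles but says nothing about positivity, so there is no way to extend a metric over a trivial bundle $E\oplus F$ and then restrict to $E$; the whole construction must live on $E$ itself. Second, the twisting factor must be chosen so as to \emph{add} a positive term to the curvature, i.e.\ we multiply by $e^{-\lambda}$ with $\lambda\ge0$ and $\lambda|_{Y}=0$, rather than by $e^{+\lambda}$ as in Theorem~\ref{thm ext G 1}.

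Since $Y$ is a closed submanifold of the Stein manifold $X$, fix a Stein neighborhood $U$ of $Y$ and a holomorphic retraction $\pi\colon U\to Y$. Because $i\Theta_{\pi^{*}h}=\pi^{*}(i\Theta_{h})$ and $d\pi$ is surjective, $\pi^{*}h$ is Nakano semi-positive on $U$: at $y\in Y$, using the splitting $T_{y}X=T_{y}Y\oplus\ker d\pi_{y}$, the Nakano form of $\pi^{*}(i\Theta_{h})$ at $y$ depends only on the $T_{y}Y$-component of a tangent vector. Take any smooth metric $h_{1}$ on $E$ and a cut-off $\chi$ with $\chi\equiv1$ on a neighborhood $V$ of $Y$ and $\operatorname{supp}\chi\subset U$, and set $h'=\chi\,\pi^{*}h+(1-\chi)h_{1}$; this is a smooth metric on $E$ with $h'|_{Y}=h$ and $i\Theta_{h'}=\pi^{*}(i\Theta_{h})\ge_{\mathrm{Nak}}0$ on $V$.

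Next pick holomorphic functions $f_{1},f_{2},\dots$ on $X$ with $|f|^{2}=\sum_{j}|f_{j}|^{2}$ smooth, $Y=\{f=0\}$, and $i\partial\bar\partial|f|^{2}$ positive definite in the normal directions at every point of $Y$ (possible on a Stein manifold, as in the remark following Theorem~\ref{thm ext G 1}). Let $\phi\ge0$ be a smooth strictly plurisubharmonic exhaustion of $X$, and, exactly as in Theorem~\ref{thm ext G 1}, choose a continuous increasing $v\ge1$ with $|f|^{-2}\le v(\phi)$ on $V^{c}$ and $i\Theta_{h'}\ge_{\mathrm{Nak}}-v(\phi)\,i\partial\bar\partial\phi\cdot I_{r}$ on $X$, together with a smooth increasing convex $u\ge0$ with $u'\ge4v^{2}$. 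Put $\lambda=|f|^{2}e^{u(\phi)}\ge0$ and $\tilde h=e^{-\lambda}h'$, so that $\tilde h$ is smooth, $\tilde h|_{Y}=h'|_{Y}=h$, and $i\Theta_{\tilde h}=i\Theta_{h'}+i\partial\bar\partial\lambda\cdot I_{r}$. Since $\log|f|^{2}+u(\phi)$ is plurisubharmonic, so is $\lambda=\exp(\log|f|^{2}+u(\phi))$, whence $i\partial\bar\partial\lambda\ge0$ everywhere; on $V\setminus Y$ the estimate (\ref{ex in5}) with $|f|^{2}>0$ gives $i\partial\bar\partial\lambda>0$, and on $V^{c}$ the same estimate gives $i\partial\bar\partial\lambda\ge2v(\phi)\,i\partial\bar\partial\phi$, so that $i\Theta_{\tilde h}>_{\mathrm{Nak}}0$ on $(V\setminus Y)\cup V^{c}$.

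The step with genuinely new content --- and the one I expect to be the main obstacle --- is positivity of $i\Theta_{\tilde h}$ \emph{at the points of $Y$} in the strictly positive case, where $\pi^{*}(i\Theta_{h})$ degenerates in the normal directions. Fix $y\in Y$ and split $T_{y}X=T_{y}Y\oplus N_{y}$ with $N_{y}=\ker d\pi_{y}$. The Nakano form of $\pi^{*}(i\Theta_{h})$ at $y$ depends only on the $T_{y}Y\otimes E_{y}$-component of its argument and is strictly positive there since $h$ is Nakano positive on $Y$; the Nakano form of $i\partial\bar\partial\lambda\cdot I_{r}$ at $y$ equals that of $e^{u(\phi)}\bigl(\sum_{j}i\partial f_{j}\wedge\bar\partial\bar f_{j}\bigr)\cdot I_{r}$ (all other terms vanish since $f|_{Y}=0$, hence $\partial\lambda|_{Y}=0$), which depends only on the $N_{y}\otimes E_{y}$-component and, by the hypothesis on the $df_{j}$, is strictly positive there; and neither form has cross terms between $T_{y}Y$ and $N_{y}$. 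Hence the Nakano form of $i\Theta_{\tilde h}$ at $y$ is a sum of a strictly positive form in the $T_{y}Y\otimes E_{y}$-variables and a strictly positive form in the $N_{y}\otimes E_{y}$-variables, so it is positive definite. In the merely semi-positive case this point is unnecessary: on $V$ one already has $i\Theta_{\tilde h}=\pi^{*}(i\Theta_{h})+i\partial\bar\partial\lambda\cdot I_{r}\ge_{\mathrm{Nak}}0$, and the $V^{c}$ estimate (now with $u'\ge v^{2}$) gives $i\Theta_{\tilde h}\ge_{\mathrm{Nak}}0$. As $V\setminus Y$, $Y$ and $V^{c}$ cover $X$, this shows $\tilde h$ is Nakano (semi-)positive on $X$ and extends $h$. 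The essential point is that, the reduction to a trivial bundle being unavailable, one must control the curvature near $Y$ by hand, and the only directions where $\pi^{*}(i\Theta_{h})$ fails to help are precisely those rescued by the Levi form of $|f|^{2}$.
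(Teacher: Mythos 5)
Your proposal is correct and follows essentially the same route as the paper: the paper also works directly on $E$ (no trivialization), glues $\rho^*(h)$ with an arbitrary metric via a cut-off to get $h'$, and twists by $e^{-|f|^2e^{u(\phi)}}$ using functions $f_j$ whose Levi form is positive in the normal directions along $Y$. The paper merely states this as ``a similar procedure to the proof of Theorem \ref{thm ext G}''; your write-up supplies the details it omits, in particular the verification of strict Nakano positivity at points of $Y$ via the splitting $T_yX=T_yY\oplus\ker d\pi_y$.
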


\begin{proof}
As $Y$ be a closed submanifold of the Stein manifold $X$, by \cite{S}, there is a Stein 
neighborhood $U\subset X$ of $Y$, and a holomorphic retraction $\rho:U\rightarrow Y$.
By the Oka-Grauert principle, $\rho^*(E|_Y)$ is isomorphic to $E|_U$, and $\rho^*(E|_Y)|_Y=E|_Y$.
Therefore, we may view $\rho^*(h)$ as a smooth hermitian metric on $E|_U$, and $\rho^*(h)|_Y=h$. 
Choose a smooth hermitian metric $h_1$ on $E$, and a smooth function $\chi$ on $X$
such that $\chi=1$ in an open neighborhood of $Y$, and $\text{supp}\chi\subset U$.
Then we get a smooth hermitian metric $$h'=\chi\rho^*(h)+(1-\chi)h_1$$ on $E$.

There is a sequence of holomorphic functions $\{f_j\}$ on $X$,  such that
$|f|^2:=\sum_{j=1}^{\infty}|f_j|^2$ converges uniformly on compact subsets of $X$, $Y=\{|f|^2=0\}$,
and $i\partial\bar{\partial}|f|^2+\omega_Y>0$ on every point $p\in Y\subset X$ for any K\"{a}hler form $\omega_Y$ on $Y$.

By a similar procedure of the proof of theorem \ref{thm ext G}, we can find a smooth
plurisubharmonic function $\varphi$ on $X$, such that $e^{-|f|^2e^\varphi}h'$ is Nakano (semi)-positive
on $X$. It is obvious $e^{-|f|^2e^\varphi}h'$ is an extension of $h$.

\end{proof}

\bibliographystyle{amsplain}

	\end{document}